\numberwithin{equation}{section}
\theoremstyle{plain}
\newtheorem{theorem}{Theorem}[section]
\newtheorem{lemma}[theorem]{Lemma}
\newtheorem{proposition}[theorem]{Proposition}
\newtheorem*{theorem-non}{Theorem}
\theoremstyle{definition}
\newtheorem{remark}[theorem]{Remark}
\theoremstyle{definition}
\newtheorem{hypothesis}[theorem]{Hypothesis}
\theoremstyle{definition}
\newcommand{\be}{\begin{equation}}
\newcommand{\ee}{\end{equation}}
\newcommand{\n}{\noindent}
\DeclareMathOperator{\loc}{loc}
\DeclareMathOperator{\BMO}{BMO}
\DeclareMathOperator{\tr}{trace}
\DeclareMathOperator{\AM}{AM}
\DeclareMathOperator{\Var}{Var}
\DeclareMathOperator{\Lin}{Lin}
\newcommand{\tsn}[1]{{[\kern-0.4ex] #1 [\kern-0.4ex]}}
\newcommand{\Ksubset}{{\subset\!\subset}}
\newcommand{\grad}{\nabla}
\newcommand{\bB}{{\mathbf B}}
\newcommand{\bH}{{\mathbf H}}
\newcommand{\bF}{{\mathbf F}}
\newcommand{\bG}{{\mathbf G}}
\newcommand{\bK}{{\mathbf K}}
\newcommand{\bu}{{\mathbf u}}
\newcommand{\bv}{{\mathbf v}}
\newcommand{\bx}{{\mathbf x}}
\newcommand{\bd}{{\mathbf d}}
\newcommand{\bz}{{\mathbf z}}
\newcommand{\bw}{{\mathbf w}}
\newcommand{\bs}{{\mathbf s}}
\newcommand{\bb}{{\mathbf b}}
\newcommand{\bA}{{\mathbf A}}
\newcommand{\sS}{{\mathcal S}}
\newcommand{\sD}{{\mathcal D}}
\newcommand{\sK}{{\mathcal K}}
\newcommand{\R}{{\mathbb R}}
\newcommand{\Z}{{\mathbb Z}}
\newcommand{\MN}{{\mathbb R}^{N\times n}}
\newcommand{\bue}{{{\mathbf u}\e}}
\newcommand{\Om}{{\Omega}}
\newcommand{\E}{{\mathcal E}}
\newcommand{\e}{_{\mathrm e}}
\newcommand{\dd}{{\mathrm d}}
\newcommand{\DD}{{{\mathrm D}}}
\newcommand{\rmT}{{\mathrm T}}
\def\Xint#1{\mathchoice
   {\XXint\displaystyle\textstyle{#1}}
   {\XXint\textstyle\scriptstyle{#1}}
   {\XXint\scriptstyle\scriptscriptstyle{#1}}
   {\XXint\scriptscriptstyle\scriptscriptstyle{#1}}
   \!\int}
\def\XXint#1#2#3{{\setbox0=\hbox{$#1{#2#3}{\int}$}
     \vcenter{\hbox{$#2#3$}}\kern-.5\wd0}}
\def\dashint{\Xint-}
\newlength{\extramargin}
\begin{document}

\title[Taylor's Theorem for Functionals on BMO]
{Taylor's Theorem for Functionals on BMO with Application
to BMO Local Minimizers}


\author[D. E. Spector]{Daniel E. Spector}
\address{Okinawa Institute of Science and Technology Graduate University,
Nonlinear Analysis Unit, 1919--1 Tancha, Onna-son, Kunigami-gun,
Okinawa, Japan}
\email{daniel.spector@oist.jp}
\thanks{}

\author[S. J. Spector]{Scott J. Spector}
\address{Department of Mathematics,
Southern Illinois University, Carbondale, IL 62901, USA}
\email{sspector@siu.edu}
\thanks{}


\date{24 May 2020}
\dedicatory{}

%
%



\begin{abstract}  In this note two results are established for
energy functionals
that are given by the integral   
of $ W(\bx,\grad\bu(\bx))$ over $\Om\subset\R^n$ with
$\grad\bu\in\BMO(\Om;\MN)$, the space of functions of
Bounded Mean Oscillation of John \& Nirenberg.
A version of Taylor's theorem is first shown to be valid provided the
integrand $W$ has polynomial growth.  This result is then used to
demonstrate  
that,
for the Dirichlet, Neumann, and mixed problems, every Lipschitz-continuous
solution of the corresponding Euler-Lagrange equations at which the
second variation of the energy is uniformly positive is a strict local
minimizer of the energy in $W^{1,\BMO}(\Om;\R^N)$,
the subspace of the Sobolev space $W^{1,1}(\Om;\R^N)$ for which the weak
derivative $\grad\bu\in\BMO(\Om;\MN)$.
\end{abstract}


\maketitle
\setlength{\parskip}{.5em}
\setlength{\parindent}{2em}
\baselineskip=15pt

\vspace{-1.25cm}

\section{Introduction}


Let $\Om\subset\R^n$, $n\ge 2$, be a Lipschitz domain.
Suppose that $\bd:\sD \to\R^N$, $N\ge1$, is a given
Lipschitz-continuous function, where  
$\sD\subset\partial\Om$,
the boundary of $\Om$. 
We herein consider functionals of the form
\be\label{eqn:energy-intro}
\E(\bu)= \int_\Om W\big(\bx,\grad\bu(\bx)\big)\, \dd\bx
\ee
\n for $W$ that satisfy, for some $a>0$ and $r>0$,
\[  
|\DD^3W(\bx,\bK)|\le a (1+|\bK|^r),
\]  
\n for all real $N$ by $n$  matrices $\bK$ and almost every
$\bx\in\Om$.  We take $\bu=\bd$ on $\sD$  and
$\bu\in W^{1,\BMO}(\Om;\R^N)$, the subspace of the Sobolev space
$W^{1,1}(\Om;\R^N)$ for which the weak derivative $\grad\bu$ is of
Bounded Mean Oscillation.  Our main result
shows that any Lipschitz-continuous weak solution $\bue$ of the
corresponding Euler-Lagrange equations:
\be\label{eqn:EL-intro}
0= \delta\E(\bue)[\bw]=
\int_\Om \DD
W\big(\bx,\grad\bue(\bx)\big)\big[\grad\bw(\bx)\big]\, \dd\bx\
\text{ for all } \bw\in\Var,
\ee
\n at which the second variation of $\E$ is uniformly positive:
for some $b>0$ and all $\bw\in\Var$,
\be\label{eqn:SV-intro}
 \delta^2\E(\bue)[\bw,\bw]=
\int_\Om \DD^2W\big(\bx,\grad\bue(\bx)\big)
\big[\grad\bw(\bx),\grad\bw(\bx)\big]\, \dd\bx
\ge
b\int_\Om |\grad\bw(\bx)|^2\,\dd\bx,
\ee
\n will satisfy, for some $c>0$,
\be\label{eqn:conclusion-intro}
\E(\bw+\bue)\ge \E(\bue) +c\int_\Om |\grad\bw(\bx)|^2\,\dd\bx
\ee
\n for all $\bw\in W^{1,\BMO}(\Om;\R^N)\cap\Var$ whose gradient
has sufficiently small \emph{norm} in $\BMO(\Om)$.  Here
\[  
\begin{gathered}
\DD^jW(\bx,\bK)= \frac{\partial^j}{\partial\bK^j}W(\bx,\bK),
\qquad
\Var:=\{\bw\in W^{1,2}(\Om;\R^N): \bw=\mathbf{0} \text{ on } \sD\},\\[4pt]
\|\grad\bu\|_{\BMO}:= \tsn{\grad\bu}_{\BMO}
+  \big| \langle\grad\bu\rangle_{\Om}  \big|,
\end{gathered}
\]  
\n  $\tsn{\cdot}_{\BMO}$ denotes the standard semi-norm on $\BMO(\Om)$
(see \eqref{eqn:BMO-V-norm}),
and $\langle\grad\bu\rangle_{\Om}$ denotes the average value of the
components of $\grad\bu$ on $\Om$.


The above result extends prior work\footnote{The
result in \cite[Section~6]{KT03} has been extended to the
Neumann and mixed problems in \cite[Section~3]{SS19}.}
 of Kristensen \&
Taheri~\cite[Section~6]{KT03} and
Campos Cordero~\cite[Section~4]{Ca17}
(see, also, Firoozye~\cite{Fi92}). 
These authors proved that, for the Dirichlet problem,
if $\bue$ is a Lipschitz-continuous weak solution of the
Euler-Lagrange equations, \eqref{eqn:EL-intro},
at which the second variation of $\E$ is uniformly positive,
\eqref{eqn:SV-intro}, then
there is a neighborhood of $\grad\bue$ in $\BMO(\Om)$ in which all
Lipschitz mappings have energy that is greater than 
the energy of $\bue$.

Our proof of the above result makes use of a version of
Taylor's theorem on $\BMO(\Om)$ that is established herein:
  Let $W$ satisfy,
for some $a>0$, $r>0$, and integer $k\ge2$,
\[  
|\DD^kW(\bx,\bK)|\le a (1+|\bK|^r),
\]  
\n for all real $N$ by $n$ matrices $\bK$, and almost every
$\bx\in\Om$.    Fix $M>0$ and
$\bF\in L^\infty(\Om;\MN)$.  Then there exists a constant
$c=c(M,||\bF||_\infty)>0$ such that
every $\bG\in\BMO(\Om;\MN)$ with
$||\bG-\bF||_{\BMO} <M$ satisfies
\be\label{eqn:Taylor-intro}
\int_\Om W(\bG)\,\dd\bx \ge
\int_\Om W(\bF)\,\dd\bx
+
\sum_{j=1}^{k-1} \frac1{j!}
\int_\Om \DD^j W(\bF)\big[\bH,\bH,\ldots,\bH\big]\,\dd\bx
-c \int_\Om |\bH|^k\,\dd\bx,
\ee
\n where $\bH=\bG-\bF$, $\bF=\bF(\bx)$, $\bG=\bG(\bx)$, and, e.g.,
$W(\bF)=W(\bx,\bF(\bx))$.


A key ingredient in our proof of \eqref{eqn:Taylor-intro} is the
\emph{interpolation inequality} \cite[Theorem~2.5]{SS19}:  If
$1\le p<q<\infty$, then there is a constant $C=C(p,q,\Om)$ such that,
for all $\psi\in\BMO(\Om)$,
\be\label{eqn:interp-intro}
\int_\Om |\psi(\bx)|^q\,\dd\bx
\le
C
\big(\tsn{\psi}_{\BMO}
+
\big| \langle\psi\rangle_{\Om}  \big|\big)^{q-p}
\int_\Om |\psi(\bx)|^p\,\dd\bx.
\ee
\n When $\Om=\R^n$ and $\langle\psi\rangle_{\R^n}=0$ this inequality is
due to Fefferman \& Stein~\cite[p.~156]{FS72}, although it is clear
from \cite[pp.~624--625]{Jo72}
that Fritz John was aware of \eqref{eqn:interp-intro} when
$\tsn{\psi}_{\BMO}$ was sufficiently small
and $\langle\psi\rangle_{\Om}=0$ (for domains $\Om$
with bounded eccentricity).


Finally, we note that our main result assumes that the solution $\bue$ of
the Euler-Lagrange equations \eqref{eqn:EL-intro} is Lipschitz continuous
and has uniformly positive second variation \eqref{eqn:SV-intro}.  It
follows that $\bue$ is a \emph{weak relative minimizer} of the energy
\eqref{eqn:energy-intro}, that is, a minimizer with respect to perturbations
that are small in $W^{1,\infty}$. Grabovsky \& Mengesha~\cite{GM07,GM09}
give further conditions\footnote{The most significant are quasiconvexity
in both the interior and at the boundary.
See Ball \& Marsden~\cite{BM84}.}  that they prove imply
that $\bue$ is then a \emph{strong relative minimizer} of $\E$,
that is, a minimizer with respect to to perturbations
that are small in $L^{\infty}$, whereas our result only changes
$W^{1,\infty}$ to $W^{1,\BMO}\subset\subset L^\infty$.   However,
as Grabovsky \& Mengesha have
noted, their results require that $\bue$ be $C^1$. Examples of
M\"{u}ller \& {\v{S}}ver{\'a}k~\cite{MS03} demonstrate that not all
Lipschitz-continuous solutions of \eqref{eqn:EL-intro} need be $C^1$.
Also, the Lipschitz-continuous example of
Kristensen \& Taheri~\cite[\S7]{KT03} satisfies both
\eqref{eqn:EL-intro} and \eqref{eqn:SV-intro}.

\section{Preliminaries}\label{sec:prelim}

For any \emph{domain} (nonempty, connected, open set)
$U\subset\R^n$, $n\ge2$, we denote by
$L^p(U;\R^N)$, $p\in[1,\infty)$, the space
of (Lebesgue) measurable functions $\bu$ with values
in $\R^N$, $N\ge1$,
whose $L^p$-norm is finite:
\[
||\bu||^p_{p}=||\bu ||^p_{p,U}
:= \int_U |\bu(\bx)|^p\,\dd\bx < \infty.
\]
\n $L^\infty(U;\R^N)$ shall denote those measurable
functions whose essential supremum is finite. We write
$L^1_{\loc}(U;\R^N)$ for the set of  measurable
functions that are integrable on every compact subset of $U$.


We shall write $\Om\subset\R^n$, $n\ge2$, to denote a
\emph{Lipschitz domain}, that is a
bounded domain whose boundary $\partial \Om$ is
(strongly) Lipschitz.
(See, e.g., \cite[p.~127]{EG92}, \cite[p.~72]{Mo66}, or
\cite[Definition~2.5]{HMT07}.)   Essentially, a bounded domain is
Lipschitz if, in a neighborhood of every $\bx\in\partial\Om$,  
the boundary is the graph of a
Lipschitz-continuous function and the domain is on ``one side''
of this graph.
$W^{1,p}(\Omega;\R^N)$ will denote the usual
Sobolev space of
functions $\bu\in L^p(\Omega;\R^N)$, $1\le p\le\infty$, whose
distributional gradient $\grad\bu$ is also contained in $L^p$.
Note that, since $\Om$ is a Lipschitz domain,
each $\bu\in W^{1,\infty}(\Omega;\R^N)$
has a representative that is Lipschitz continuous.
We shall write $\MN$ for the space of real $N$ by $n$ matrices with
  inner product $\bA:\bB=\tr(\bA\bB^\rmT)$ and norm
$|\bA|=\sqrt{\bA:\bA}$, where $\bB^\rmT$ denotes the transpose of $\bB$.


\subsection{Bounded Mean Oscillation}\label{sec:BMO}

The $\BMO$-seminorm\footnote{See
Brezis \& Nirenberg~\cite{BN95,BN96},
John \& Nirenberg~\cite{JN61}, Jones~\cite{Jo82},
Stein~\cite[\S4.1]{Stein-1993}, or, e.g.,
\cite[\S3.1]{Gr09}
for properties of $\BMO$.} of $\bF\in L^1_{\loc}(U;\MN)$
is given by
\be\label{eqn:BMO-V-norm}
\tsn{\bF}_{\BMO(U)}:=  \sup_{Q\Ksubset U}
 \dashint_Q |\bF(\bx)-\langle\bF\rangle_Q|\,\dd\bx,
\ee
\n where the supremum is to be taken over all
nonempty, bounded (open) $n$-dimensional
hypercubes
 $Q$  \emph{with faces parallel to the coordinate hyperplanes}.  Here
\[
\langle\bF\rangle_U
:=\dashint_U \bF(\bx)\,\dd\bx
:= \frac1{|U|}\int_U \bF(\bx)\,\dd\bx
\]
\n \emph{denotes the average value of the components of} $\bF$,
 $|U|$ denotes the
$n$-dimensional Lebesgue measure of any
bounded domain $U\subset\R^n$, and we write $Q\Ksubset U$ provided
that $Q\subset K_Q \subset U$ for some compact set $K_Q$.

The space $\BMO(U;\MN)$ (Bounded Mean Oscillation) is defined by
\be\label{eqn:BMO-V}
\BMO(U;\MN)
:=
\{\bF\in L_{\loc}^1(U;\MN): \tsn{\bF}_{\BMO(U)} < \infty\}.
\ee
\n One consequence of
\eqref{eqn:BMO-V-norm}--\eqref{eqn:BMO-V} is that
$L^\infty(U;\MN)\subset \BMO(U;\MN)$ with
\[  
\tsn{\bF}_{\BMO(U)} \le 2\|\bF\|_{\infty,U}\
\text{ for all $\bF\in L^\infty(U;\MN)$.}
\]  
\n We note for future reference that if $U=\Om$, a Lipschitz domain,
then a result of P.~W.~Jones~\cite{Jo82} implies, in particular, that
\[  
\BMO(\Om;\MN)\subset L^1(\Om;\MN).
\]  
\n It follows that\footnote{If $\bF=\grad\bw$ with $\bw=\mathbf{0}$ on
$\partial\Om$ then $\|\grad\bw\|_{\BMO}=\tsn{\grad\bw}_{\BMO(\Om)}$
since the integral of $\grad\bw$ over $\Om$ is then zero.}
\be\label{eqn:BMO-Om-norm}
\|\bF\|_{\BMO}:=\tsn{\bF}_{\BMO(\Om)} + | \langle\bF\rangle_{\Om}|
\ee
\n is a \emph{norm} on $\BMO(\Om;\MN)$.


\subsection{Further Properties of
\texorpdfstring{$\BMO$}{BMO}}\label{sec:maximal}

The main property of $\BMO$ that we shall use is contained
in the following result.  Although the proof can be found in
\cite{SS19}, the significant analysis it is based upon is due to
Fefferman \& Stein~\cite{FS72}, Iwaniec~\cite{Iw82}, and Diening,
R$\overset{_\circ}{\mathrm{u}}$\v zi\v cka, \& Schumacher~\cite{DRS10}.


\begin{proposition}\label{thm:main-2}  Let $\Om\subset\R^n$, $n\ge2$, be a
Lipschitz\footnote{This result, as stated, is valid for a larger
class of domains:  Uniform domains.  (Since $\BMO\subset L^1$
for such domains.  See P.~W.~Jones~\cite{Jo82},
Gehring \& Osgood~\cite{GO79}, and e.g., \cite{Ge87}.)  A slightly
modified version of this result is valid for John domains.
 See \cite{SS19} and the references therein.}
domain.  Then, for all $q\in[1,\infty)$,
\[
\BMO(\Om;\MN)\subset L^q(\Om;\MN)
\]
\n with continuous injection, i.e., there are constants
$J_1=J_1(q,\Om)>0$ such that,
for every $\bF\in\BMO(\Om;\MN)$,
\be\label{eqn:BMO-in-Lq}
\bigg(\,\dashint_\Om |\bF|^q\,\dd\bx\bigg)^{\!\!1/q}\!
\le
J_1\|\bF\|_{\BMO}.
\ee
\n Moreover, if  $1\le p<q<\infty$
then there exists
constants $J_2=J_2(p,q,\Om)>0$ such that every
$\bF\in\BMO(\Om;\MN)$ satisfies
\be\label{eqn:RH}
||\bF||_{q,\Om}
\le
J_2\Big(||\bF||_{\BMO}\Big)^{1-p/q}
\Big(||\bF||_{p,\Om}\Big)^{p/q}.
\ee
\n Here  $\|\cdot\|_{\BMO}$ is given by  (\ref{eqn:BMO-V-norm})
and  (\ref{eqn:BMO-Om-norm}).
\end{proposition}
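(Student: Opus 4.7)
The plan is to establish the $L^q$-embedding \eqref{eqn:BMO-in-Lq} first by reducing to the classical John-Nirenberg inequality, and then to obtain the interpolation inequality \eqref{eqn:RH} via a distribution-function argument in the spirit of Fefferman and Stein.

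For \eqref{eqn:BMO-in-Lq}, I would decompose $\bF = (\bF - \langle\bF\rangle_\Om) + \langle\bF\rangle_\Om$. By the triangle inequality it suffices to control the zero-mean piece by $C\tsn{\bF}_{\BMO(\Om)}\,|\Om|^{1/q}$, since the constant piece contributes $|\langle\bF\rangle_\Om|\,|\Om|^{1/q}$, already subsumed by $\|\bF\|_{\BMO}\,|\Om|^{1/q}$. Since a Lipschitz domain is uniform, Jones' extension theorem~\cite{Jo82} produces an extension of $\bF-\langle\bF\rangle_\Om$ to a cube $Q_\Om$ comparable in measure to $\Om$ whose BMO seminorm is bounded by $C\tsn{\bF}_{\BMO(\Om)}$. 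The integrated John-Nirenberg inequality on $Q_\Om$, namely $\|g-\langle g\rangle_{Q_\Om}\|_{L^q(Q_\Om)}\le C_q|Q_\Om|^{1/q}\tsn{g}_{\BMO(Q_\Om)}$, then delivers \eqref{eqn:BMO-in-Lq} after one further reduction of $\langle g\rangle_{Q_\Om}$ using $\langle g\rangle_\Om=0$ together with the standard BMO estimate for the difference of two cube averages.

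For \eqref{eqn:RH}, fix $1\le p<q<\infty$ and assume, after absorbing the constant mode into $\|\bF\|_{\BMO}$, that $\langle\bF\rangle_\Om=0$. I would start from the layer-cake identity
\[
\int_\Om |\bF|^q\,\dd\bx = q\int_0^\infty t^{q-1}\bigl|\{\bx\in\Om:|\bF(\bx)|>t\}\bigr|\,\dd t
\]
and split the $t$-integral at $t_0 = K\|\bF\|_{\BMO}$ for a suitable large constant $K$. For $t\le t_0$, Chebyshev gives $|\{|\bF|>t\}|\le t^{-p}\|\bF\|_p^p$, contributing at most $\tfrac{q}{q-p}t_0^{q-p}\|\bF\|_p^p$, which is already of the desired order $\|\bF\|_{\BMO}^{q-p}\|\bF\|_p^p$. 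For $t>t_0$, I would invoke the Fefferman-Stein type sharpening
\[
\bigl|\{|\bF|>t\}\bigr|\le C\,t^{-p}\|\bF\|_p^p\,\exp\!\bigl(-c\,t/\|\bF\|_{\BMO}\bigr),
\]
obtained from a Calder\'on-Zygmund stopping-time decomposition of $|\bF|^p$ at level $(t/C)^p$ combined with the John-Nirenberg inequality on each stopping cube. Integrating this bound against $t^{q-1}$ gives an exponentially decaying integrand whose total contribution is again of the order $\|\bF\|_{\BMO}^{q-p}\|\bF\|_p^p$.

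The main obstacle is adapting the Fefferman-Stein framework, originally formulated on $\R^n$, to a bounded Lipschitz domain: one must extend $\bF$ while preserving its BMO geometry and patch the local John-Nirenberg estimates on stopping cubes into a truly global estimate on $\Om$. This is precisely where Jones' extension theorem~\cite{Jo82} and the refined sharp-maximal-function and Calder\'on-Zygmund machinery of Iwaniec~\cite{Iw82} and Diening, Ru\v{z}i\v{c}ka, and Schumacher~\cite{DRS10} enter. Once this geometric step is in place, the distribution-function computation is essentially routine bookkeeping.
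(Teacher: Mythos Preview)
The paper does not actually prove this proposition: it states the result and refers to \cite{SS19} for the proof, noting that the underlying analysis is due to Fefferman \& Stein~\cite{FS72}, Iwaniec~\cite{Iw82}, and Diening, R\r{u}\v{z}i\v{c}ka \& Schumacher~\cite{DRS10}. Your outline is consistent with exactly this circle of ideas---Jones' extension to pass from the Lipschitz domain to a setting where John--Nirenberg applies, and a Fefferman--Stein/Calder\'on--Zygmund good-$\lambda$ argument for the interpolation inequality---so in spirit it matches what the paper is pointing to.

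Two small points are worth tightening. First, the reduction to $\langle\bF\rangle_\Om=0$ in \eqref{eqn:RH} is not entirely free, since subtracting the mean changes the $L^p$-norm on the right-hand side as well; you should note that $|\langle\bF\rangle_\Om|\,|\Om|^{1/p}\le\|\bF\|_{p,\Om}$ by Jensen, so $\|\bF-\langle\bF\rangle_\Om\|_p\le 2\|\bF\|_p$ and the reduction goes through with a harmless constant. Second, the exponential distribution estimate you invoke for $t>t_0$ is exactly the place where the domain geometry matters: the Calder\'on--Zygmund stopping must be performed in a way compatible with $\Om$, and this is precisely what the decomposition technique of \cite{DRS10} (or the Iwaniec argument after extension) provides. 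You acknowledge this, but it is the genuine content of the proof rather than ``routine bookkeeping,'' so be careful not to understate it.
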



\section{An Implication of Taylor's Theorem for a Functional on
\texorpdfstring{$\BMO$}{BMO}}

\begin{hypothesis}\label{def:W-TT} Fix $k,N\in\Z$ with $k\ge2$ and $N\ge1$.
We suppose that we are
given an \emph{integrand}
$W:\Omega\times\MN\to\R$
that satisfies:
\begin{enumerate}[topsep=-2pt]
\item[(H1)] $\bK\mapsto W(\bx,\bK) \in C^k(\MN)$, for $a.e.~\bx\in\Omega$;
\item[(H2)] $(\bx,\bK)\mapsto\DD^j W(\bx,\bK)$, $j=0,1,\ldots,k$, are each
(Lebesgue) measurable on their common domain
$\Omega\times\MN$; and
\item[(H3)] There are constants $c_k>0$ and $r>0$ such that,
for all $\bK\in\MN$ and $a.e.~\bx\in\Om$,
\[  
|\DD^k W(\bx,\bK)|\le c_k(1+|\bK|^r).
\]  
\end{enumerate}
\n Here, and in the sequel,
\[
\DD^0 W(\bx,\bK):=W(\bx,\bK), \qquad
\DD^j W(\bx,\bK)
:=
\frac{\partial^j}{\partial\bK^j}W(\bx,\bK)
\]
\n denotes $j$-th derivative of $\bK\mapsto W(\cdot,\bK)$. Note that,
for every $\bK\in\MN$, $a.e.~\bx\in\Om$, and $j=1,2,\ldots,k$,
\[ 
\DD^j W(\bx,\bK)\in
\Lin(\overbrace{\MN\times\MN\times\cdots\times\MN}^{\text{$j$ copies}} ;\R),
\] 
\n that is, $\DD^j W(\bx,\bK)$ can be viewed as a multilinear map from
$j$ copies of $\MN$ to $\R$.
\end{hypothesis}

\begin{remark}\label{rem:growth}  Hypothesis (H3) implies that each of
the functions, $\DD^j W$, $j=0,1,\ldots,k-1$, satisfies a similar growth
condition, i.e., 
$|\DD^{j} W(\bx,\bK)|\le c_j(1+|\bK|^{r+k-j})$.  It follows that each
of the functions $\DD^{j} W$ is (essentially) bounded on $\Om\times \sK$
for any compact $\sK\subset\MN$.
\end{remark}


\begin{lemma}\label{lem:Taylor}  Let $W$ satisfy Hypothesis~\ref{def:W-TT}.
Fix $M>0$ and
$\bF\in L^\infty(\Om;\MN)$.  Then there exists a constant
$c=c(M,||\bF||_\infty)>0$ such that
every $\bG\in\BMO(\Om;\MN)$ with
$||\bG-\bF||_{\BMO} <M$ satisfies
\be\label{eqn:Taylor-final}
\int_\Om W(\bG)\,\dd\bx \ge
\int_\Om W(\bF)\,\dd\bx
+
\sum_{j=1}^{k-1} \frac1{j!}
\int_\Om \DD^j W(\bF)\big[\bH,\bH,\ldots,\bH\big]\,\dd\bx
-c \int_\Om |\bH|^k\,\dd\bx,
\ee
\n where $\bH=\bG-\bF$, $\bF=\bF(\bx)$, $\bG=\bG(\bx)$, and, e.g.,
$W(\bF)=W(\bx,\bF(\bx))$.
\end{lemma}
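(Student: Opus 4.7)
The plan is to apply the classical pointwise Taylor expansion of the $C^k$ map $\bK\mapsto W(\bx,\bK)$ around $\bF(\bx)$, integrate over $\Om$, and then control the integral remainder using (H3) together with the interpolation inequality \eqref{eqn:RH} from Proposition~\ref{thm:main-2}. Concretely, for almost every $\bx\in\Om$, the integral form of Taylor's theorem yields
\[
W(\bx,\bG) = W(\bx,\bF) + \sum_{j=1}^{k-1}\frac{1}{j!}\DD^j W(\bx,\bF)[\bH,\ldots,\bH] + R(\bx),
\]
where $\bH=\bG-\bF$ and
\[
R(\bx) = \frac{1}{(k-1)!}\int_0^1 (1-t)^{k-1}\DD^k W\bigl(\bx,\bF(\bx)+t\bH(\bx)\bigr)[\bH,\ldots,\bH]\,\dd t.
\]
It then suffices to prove $\int_\Om R(\bx)\,\dd\bx \ge -c\int_\Om|\bH|^k\,\dd\bx$ for some $c=c(M,\|\bF\|_\infty)$, and to verify that each term in the sum is integrable.

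Next I would produce a pointwise estimate on $R$. Applying (H3) with $\bK=\bF+t\bH$ and the elementary inequality $(|\bF|+|\bH|)^r \le C_r(|\bF|^r+|\bH|^r)$ (valid for any $r>0$) gives
\[
|\DD^k W(\bx,\bF+t\bH)| \le c_k + c_k C_r\|\bF\|_\infty^r + c_k C_r|\bH(\bx)|^r,
\]
so that $|R(\bx)| \le C_1(M,\|\bF\|_\infty)\,|\bH(\bx)|^k + C_2\,|\bH(\bx)|^{k+r}$ for constants depending only on $c_k,k,r,\|\bF\|_\infty$.

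The main obstacle — and the only place the BMO structure really enters — is controlling the super-$L^k$ term $\int_\Om|\bH|^{k+r}\,\dd\bx$, since $\bH$ is merely in $\BMO$ and not bounded pointwise. Here I invoke the interpolation inequality \eqref{eqn:RH} with $p=k$ and $q=k+r$, which (after raising to the power $q$) yields
\[
\int_\Om |\bH|^{k+r}\,\dd\bx \le J_2^{\,k+r}\|\bH\|_{\BMO}^{\,r}\int_\Om|\bH|^k\,\dd\bx \le J_2^{\,k+r}M^r\int_\Om|\bH|^k\,\dd\bx,
\]
where I used the hypothesis $\|\bG-\bF\|_{\BMO}<M$. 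Combining the two estimates produces $\int_\Om|R|\,\dd\bx \le c(M,\|\bF\|_\infty)\int_\Om|\bH|^k\,\dd\bx$, and since $R\ge -|R|$ pointwise, the claimed inequality \eqref{eqn:Taylor-final} follows upon integration.

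Finally, I would verify that every integral in \eqref{eqn:Taylor-final} is in fact finite, which keeps the derivation rigorous. By Remark~\ref{rem:growth}, each $\DD^j W$ ($j=0,1,\ldots,k-1$) is (essentially) bounded on $\Om\times\{\bK:|\bK|\le\|\bF\|_\infty\}$; since $\bF\in L^\infty$, the matrix fields $\DD^j W(\bx,\bF(\bx))$ are in $L^\infty(\Om)$, and the multilinear integrand in the $j$-th term is dominated by a constant times $|\bH|^j$. Proposition~\ref{thm:main-2} then gives $|\bH|^j\in L^1(\Om)$ for every $j\le k+r$, so all integrals in the Taylor expansion — as well as the bound on $R$ — are absolutely convergent.
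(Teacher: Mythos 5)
Your proposal is correct and follows essentially the same route as the paper: pointwise Taylor expansion with integral remainder, the growth bound (H3) applied to $\DD^k W(\bx,\bF+t\bH)$ to obtain the pointwise estimate $|R|\le C_1|\bH|^k + C_2|\bH|^{k+r}$, and the interpolation inequality \eqref{eqn:RH} with $p=k$, $q=k+r$ together with $\|\bH\|_{\BMO}<M$ to absorb the super-$L^k$ term. The integrability checks via Remark~\ref{rem:growth} and Proposition~\ref{thm:main-2} likewise mirror the paper's \eqref{eqn:H-in-Lq}--\eqref{eqn:DjW-in-Lq}.
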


\begin{proof} Fix $M>0$ and $\bF\in L^\infty(\Om;\MN)$.
Let $\bG\in\BMO(\Om;\MN)$ satisfy
$||\bG-\bF||_{\BMO} <M$.   We first note that
\eqref{eqn:BMO-in-Lq} in Proposition~\ref{thm:main-2} yields
\be\label{eqn:H-in-Lq}
\bH:=\bG-\bF\in L^q(\Om;\MN) \ \text{ for every $q\ge1$},
\ee
\n while (H3) together with the fact that $\bF$ is in $L^\infty$
yields (see Remark~\ref{rem:growth}), for some $C>0$
and $a.e.~\bx\in\Om$,
\be\label{eqn:DjW-bounded}
 \big|\DD^jW\big(\bx,\bF(\bx)\big)\big|\le C, \quad j=0,1,\ldots,k-1.
\ee
\n Consequently, \eqref{eqn:H-in-Lq} and \eqref{eqn:DjW-bounded}
yield,  for every $q\ge1$,
\be\label{eqn:DjW-in-Lq}
\bx\mapsto
\DD^j W\big(\bx,\bF(\bx)\big)
\big[\bH(\bx),\bH(\bx),\ldots,\bH(\bx)\big]
\in L^q(\Om;\MN),
\ee
\n for $j=0,1,\ldots,k-1$.


   Next, by Taylor's theorem for the function $\bA\mapsto W(\cdot,\bA)$,
for almost every $\bx\in\Om$,
\be\label{eqn:Tay-1}
\begin{aligned}
 W(\bG)&=  W(\bF) +
\sum_{j=1}^{k-1} \frac1{j!}\DD^j W(\bF)\big[\bH,\bH,\ldots,\bH\big]
+ R(\bF;\bH),\\
R(\bF;\bH)&:=\int_0^1 \frac{(1-t)^{k-1}}{(k-1)!}
\DD^kW(\bF+t\bH)\big[\bH,\bH,\ldots,\bH\big]\,\dd t.
\end{aligned}
\ee
\n We note that hypothesis (H3) together with the inequality
$|a+b|^r\le c_r (|a|^r+|b|^r)$, $c_r=\max\{1,2^{r-1}\}$,
and the fact that $t\in[0,1]$ gives us
\be\label{eqn:Dk-bounded}
|\DD^kW(\bF+t\bH)|\le c_k\big(1+|\bF+t\bH|^r\big)
\le
c_k+c_kc_r\big(|\bF|^r+|\bH|^r\big)
\ee
\n and hence the absolute value of the integrand in \eqref{eqn:Tay-1}$_2$
is bounded by $c_k/(k-1)!$ times
\be\label{eqn:Dk-bounded-2}
|\bH|^k\big(1+ c_r||\bF||^r_\infty\big)+c_r|\bH|^{k+r}.
\ee


We next integrate \eqref{eqn:Tay-1}$_{1}$ and \eqref{eqn:Tay-1}$_{2}$
over $\Om$  to get, in view
of \eqref{eqn:DjW-in-Lq}, \eqref{eqn:Dk-bounded}, and
\eqref{eqn:Dk-bounded-2},
\be\label{eqn:Tay-2}
\int_\Om W(\bG)\,\dd\bx
=
\int_\Om W(\bF)\,\dd\bx
+
\sum_{j=1}^{k-1} \frac1{j!}
\int_\Om \DD^j W(\bF)[\bH,\bH,\ldots,\bH]\,\dd\bx +
\int_\Om  R(\bF;\bH)\,\dd\bx
\ee
\n and
\be\label{eqn:Tay-3}
\begin{aligned}
\int_\Om  R(\bF;\bH)\,\dd\bx
&\le C_1\int_\Om |\bH|^k\,\dd\bx
+
C_2\int_\Om|\bH|^{k+r} \,\dd\bx\\
&\le
\big(C_1+C_2J_2^{k+r}||\bH||^r_{\BMO}\big)\int_\Om |\bH|^k\,\dd\bx,
\end{aligned}
\ee
\n where we have made use of \eqref{eqn:RH} of Proposition~\ref{thm:main-2}
with $p=k$ and $q=k+r$, $C_2:=c_kc_r/(k-1)!$, and
$C_1:=c_k(1+c_r||\bF||^r_\infty)/(k-1)!$.  The desired result,
\eqref{eqn:Taylor-final}, now follows from \eqref{eqn:Tay-2} and
\eqref{eqn:Tay-3}.
\end{proof}

\section{The Second Variation and
\texorpdfstring{$\BMO$}{BMO} Local Minimizers.}\label{sec:BMO-local}


We take
\[
\partial \Omega = \overline{\sD} \cup \overline{\sS}\ \
\text{with $\sD$ and $\sS$ relatively open and }
\sD\cap\sS=\varnothing.
\]
\n If $\sD\ne\varnothing$ we assume that a Lipschitz-continuous
function $\bd:\sD\to\R^N$
is prescribed. We define
\[  
W^{1,\BMO}(\Om;\R^N):=
\{\bu\in W^{1,1}(\Omega;\R^N): \grad\bu\in \BMO(\Omega;\MN)\}
\]  
\n and denote the set of \emph{Admissible Mappings} by
\[
\AM:=\{\bu\in W^{1,\BMO}(\Omega;\R^N):
\text{\ $\bu=\bd$ on $\sD$ or
$\langle\bu\rangle_\Omega=\mathbf{0}\, $ if $\, \sD=\varnothing$}\}.
\]
\n The \emph{energy} of $\bu\in\AM$ is defined by
\be\label{eqn:energy}
\E(\bu):=\int_\Omega W\big(\bx,\grad\bu(\bx)\big)\,\dd\bx,
\ee
\n where $W$ is given by Hypothesis~\ref{def:W-TT} with $k=3$.
We shall assume that we are given a
$\bu\e\in\AM$ that is a weak
solution of the \emph{Euler-Lagrange equations}
corresponding to \eqref{eqn:energy}, i.e.,
\be\label{eqn:ES}
0=\int_\Omega
\DD W\big(\bx,\grad\bu\e(\bx)\big)[\grad\bw(\bx)]
 \,\dd\bx,
\ee
\n for all \emph{variations} $\bw\in\Var$, where
\[
\Var:=\{\bw\in W^{1,2}(\Omega;\R^N):
\bw=\mathbf{0} \text{ on $\sD$  or
$\langle\bw\rangle_\Omega=\mathbf{0}$ if $\sD=\varnothing$}\}.
\]


\begin{theorem}\label{thm:SVP=LMBMO} Let $W$ satisfy
Hypothesis~\ref{def:W-TT} with $k=3$.  Suppose that
$\bu\e\in\AM\cap\, W^{1,\infty}(\Om;\R^N)$ is a weak solution of
\eqref{eqn:ES}
that satisfies, for some $a>0$, 
\be\label{eqn:small+SV}
\int_\Omega \DD^2 W(\grad\bu\e)
\big[\grad\bz, \grad\bz\big]\,\dd\bx
\ge
4a\int_\Omega |\grad\bz|^2\,\dd\bx\
\text{ for all $\bz\in\Var$}.
\ee
\n Then there exists a $\delta>0$ such that
any $\bv\in\AM$ that satisfies
\be\label{eqn:small+B}
||\grad\bv-\grad\bu\e||_{\BMO}<\delta
\ee
\n will also satisfy
\be\label{eqn:E-taylor-repeat-2}
\E(\bv)\ge \E(\bu\e)
+a\int_\Omega|\grad\bv-\grad\bu\e|^2\,\dd\bx.
\ee
\n  In particular, any $\bv\not\equiv\bu\e$ that satisfies
\eqref{eqn:small+B} will have strictly greater
energy than $\bu\e$.
\end{theorem}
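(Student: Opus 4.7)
The plan is to apply Lemma~\ref{lem:Taylor} with $k=3$, $\bF:=\grad\bu\e\in L^\infty(\Om;\MN)$, and $\bG:=\grad\bv\in\BMO(\Om;\MN)$, so that $\bH=\grad\bw$ where $\bw:=\bv-\bu\e$. First I would verify that $\bw$ is an admissible variation, i.e.\ $\bw\in\Var$: since $\bv,\bu\e\in\AM$, the function $\bw$ satisfies the required homogeneous boundary/average condition on $\sD$, and Proposition~\ref{thm:main-2} together with $\grad\bu\e\in L^\infty$ gives $\grad\bw\in L^2(\Om;\MN)$; combined with a Poincar\'e-type inequality on the Lipschitz domain $\Om$ this places $\bw\in W^{1,2}(\Om;\R^N)\cap\Var$.

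Next I would insert the expansion provided by \eqref{eqn:Taylor-final} (with $k=3$) into the definition of $\E$ to obtain
\[
\E(\bv) \ge \E(\bu\e) + \int_\Om \DD W(\grad\bu\e)[\grad\bw]\,\dd\bx
+ \tfrac12\!\int_\Om \DD^2 W(\grad\bu\e)[\grad\bw,\grad\bw]\,\dd\bx
- c\!\int_\Om |\grad\bw|^3\,\dd\bx.
\]
The first order term vanishes because $\bu\e$ is a weak solution of \eqref{eqn:ES} and $\bw\in\Var$, while the second order term is bounded below by $2a\int_\Om|\grad\bw|^2\,\dd\bx$ thanks to hypothesis \eqref{eqn:small+SV}. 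This reduces the desired conclusion \eqref{eqn:E-taylor-repeat-2} to showing that the cubic remainder can be absorbed: namely, that for $\|\grad\bw\|_{\BMO}$ sufficiently small,
\[
c\int_\Om|\grad\bw|^3\,\dd\bx \le a\int_\Om|\grad\bw|^2\,\dd\bx.
\]

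The key (and essentially only non-bookkeeping) step is this absorption, which I would handle using the interpolation inequality \eqref{eqn:RH} of Proposition~\ref{thm:main-2} with $p=2$ and $q=3$. This yields
\[
\int_\Om |\grad\bw|^3\,\dd\bx \le J_2^{\,3}\,\|\grad\bw\|_{\BMO}\int_\Om|\grad\bw|^2\,\dd\bx,
\]
so choosing $\delta>0$ small enough (depending on $a$, $c$, and $J_2$, hence ultimately on $M$, $\|\grad\bu\e\|_\infty$, and the constants in Hypothesis~\ref{def:W-TT}) so that $cJ_2^{\,3}\delta\le a$ produces \eqref{eqn:E-taylor-repeat-2}. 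The value of $c$ furnished by Lemma~\ref{lem:Taylor} depends on a bound on $\|\grad\bv-\grad\bu\e\|_{\BMO}$, so one has to fix any $M>0$ first (say $M=1$), determine the resulting $c$, and then select $\delta\le\min\{M,a/(cJ_2^{\,3})\}$; this circular-looking dependence is really the main subtlety in the argument but is resolved by simply ordering the choices correctly.

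Finally, for the strict-minimizer statement, if $\bv\not\equiv\bu\e$ then $\grad\bw$ cannot vanish almost everywhere on $\Om$: either $\bw=\mathbf{0}$ on $\sD\ne\varnothing$ and $\grad\bw\equiv\mathbf{0}$ would force $\bw\equiv\mathbf{0}$, or $\sD=\varnothing$ and $\langle\bw\rangle_\Om=\mathbf{0}$ would do the same. Hence $\int_\Om|\grad\bw|^2\,\dd\bx>0$ and \eqref{eqn:E-taylor-repeat-2} gives $\E(\bv)>\E(\bu\e)$. I expect no further obstacle beyond the careful ordering of the constants discussed above.
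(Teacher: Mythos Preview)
Your proposal is correct and follows essentially the same route as the paper: apply Lemma~\ref{lem:Taylor} with $k=3$, kill the first-order term via \eqref{eqn:ES}, use \eqref{eqn:small+SV} on the second-order term, and absorb the cubic remainder by means of the interpolation inequality \eqref{eqn:RH} with $p=2$, $q=3$. Your discussion of the ordering of constants and the verification that $\bw\in\Var$ is, if anything, slightly more explicit than the paper's own presentation.
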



\begin{remark}\label{rem:extensions}  1.~The theorem's conclusions remain
valid if one subtracts $\int_\Om \bb(\bx)\cdot\bu(\bx)\,\dd\bx$ and
$\int_{\sS}\bs(\bx)\cdot\bu(\bx)\,\dd{\mathcal{S}_\bx}$
from $\E$.  2.~Fix $q>2$.  Then inequality \eqref{eqn:RH} in
Proposition~\ref{thm:main-2} together  with
\eqref{eqn:E-taylor-repeat-2} yields a constant $\hat{j}=\hat{j}(q)$
such that any $\bv\in\AM$ that satisfies  \eqref{eqn:small+B} will
also satisfy
\[ 
\E(\bv)\ge \E(\bu\e)
+\hat{a}\hat{j}\delta^{2-q} \int_\Omega|\grad\bv-\grad\bu\e|^q\,\dd\bx.
\] 
\end{remark}

\begin{remark} The conclusions of Theorem~\ref{thm:SVP=LMBMO} remain
valid if we replace the assumption that $\bu\e$ is a weak solution of
\eqref{eqn:ES} by the assumption that $\bu\e$ is a weak relative minimizer
of $\E$, i.e., $\E(\bv)\ge \E(\bu\e)$ for all
$\bv\in\AM\cap\, W^{1,\infty}(\Om;\R^N)$
with $\|\grad\bv-\grad\bu\e\|_\infty$ sufficiently small.
\end{remark}


\begin{proof}[Proof of Theorem~\ref{thm:SVP=LMBMO}]
Let $\bu\e\in\AM$ be a weak solution of the Euler-Lagrange equations,
\eqref{eqn:ES}, that satisfies \eqref{eqn:small+SV}. Suppose that
$\bv\in\AM$ satisfies \eqref{eqn:small+B}
for some $\delta>0$ to be determined later and define
$\bw:=\bv-\bu\e\in\Var\cap W^{1,\BMO}$.  Then Lemma~\ref{lem:Taylor}
yields a constant $c>0$, such that
\be\label{eqn:final-in-1}
\E(\bv)\ge\E(\bue)+ 2\hat{k}\int_\Omega |\grad\bw|^2\,\dd\bx
-c\int_\Omega |\grad\bw|^3\,\dd\bx,
\ee
\n where we have made use of
\eqref{eqn:energy}--\eqref{eqn:small+SV}.      


We next note that inequality \eqref{eqn:RH} in
Proposition~\ref{thm:main-2}  (with $q=3$ and $p=2$) gives us
\be\label{eqn:final-in-2}
J^3||\grad\bw||_{\BMO}\int_\Omega |\grad\bw|^2\,\dd\bx
\ge \int_\Omega |\grad\bw|^3\,\dd\bx.
\ee
\n The desired inequality, \eqref{eqn:E-taylor-repeat-2}, now follows
from  \eqref{eqn:small+B}, \eqref{eqn:final-in-1}, and
\eqref{eqn:final-in-2} when $\delta$ is sufficiently small.
Finally,  $\E(\bv)>\E(\bue)$
is clear from \eqref{eqn:E-taylor-repeat-2} since $\Om$ is a connected
open region and either $\langle\bw\rangle_\Omega=\mathbf{0}$  or
$\bw=\mathbf{0}$ on $\sD\subset\partial\Om$.
\end{proof}

\end{document}